\theoremstyle{definition}
\newtheorem{thm}{Theorem}[section]
\newtheorem{lem}[thm]{Lemma}
\newtheorem{th-def}[thm]{Theorem-Definition}
\newtheorem{cor}[thm]{Corollary}
\newtheorem{defn-lem}[thm]{Definition-Lemma}
\newtheorem{rem}[thm]{Remark}
\numberwithin{equation}{section}
\def \Q{{\mathbb Q}}
\def \C{{\mathbb C}}
\def \F{{\mathbb F}}
\def \Z{{\mathbb Z}}
\def \R{{\mathbb R}}
\def\map#1.#2.{#1 \longrightarrow #2}
\def\rmap#1.#2.{#1 \dasharrow #2}
\DeclareMathOperator{\rank}{rank}
\DeclareMathOperator{\ord}{ord}
\DeclareMathOperator{\im}{Im}
\DeclareMathOperator{\Aut}{Aut}
\def\fb#1.{\underset #1 \to \times}
\def\pr#1.{\Bbb P^{#1}}
\def\ring#1.{\mathcal O_{#1}}
\def\mlist#1.#2.{{#1}_1,{#1}_2,\dots,{#1}_{#2}}
\def\uloopr#1{\ar@'{@+{[0,0]+(-4,5)} @+{[0,0]+(0,10)}
@+{[0,0]+(4,5)}}
  ^{#1}}
\def\dloopr#1{\ar@'{@+{[0,0]+(-4,-5)} @+{[0,0]+(0,-10)}
@+{[0,0]+(4,-5)}}
  _{#1}}
\def\rloopd#1{\ar@'{@+{[0,0]+(5,4)} @+{[0,0]+(10,0)}
@+{[0,0]+(5,-4)}}
  ^{#1}}
\def\lloopd#1{\ar@'{@+{[0,0]+(-5,4)} @+{[0,0]+(-10,0)}
@+{[0,0]+(-5,-4)}}
  _{#1}}
\long\def\ignore#1{}
\long\def\ignore#1{#1}
\title{The non-symplectic index of supersingular K3 surfaces}
\author{Junmyeong Jang}
\date{}
\begin{document}

\newpage
 \normalsize
\maketitle
\vspace{0.3cm}

\small

\vspace{0.3cm}
Mathematics Subject Classification : 14J20, 14J28

\normalsize
\medskip
     \section{Introduction}
       Let $X$ be an algebraic complex K3 surface. The second integral singular cohomology $H^{2}(X/\Z)$ is an even unimodular integral lattice of signature $(3,19)$. This lattice is unique and isomorphic to $U ^{3} \oplus E_{8}^{2}$. Here $U$ is the unimodular hyperbolic lattice and $E_{8}$ is the negative definite root lattice of type $E_{8}$. The Neron-Severi group of $X$ is an even lattice of signature $(1, \rho(X) -1)$. Here the rank of $NS(X)$, $\rho (X)$ is the Picard number of $X$. The cycle map induces a primitive embedding of lattices
     $$NS(X) \hookrightarrow H^{2}(X/\Z).$$
     The orthogonal complement of this embedding is the transcendental lattice of $X$ which is denoted by $T(X)$.
     $T(X)$ is an even integral lattice of signature $(2, 20 -\rho (X))$. By the Hodge decomposition, we may regard the one dimensional complex space of
     the global two forms of $X$, $H^{0}(X, \Omega ^{2}_{X/\C})$ is a direct factor of $T(X) \otimes \C$. Let us denote the representations of the automorphism group of $X$, $\Aut(X)$ on $T(X)$ and $H^{0}(X, \Omega _{X/\C} ^{2})$ by
     \begin{center}
     $\chi _{X} : \Aut(X) \to O(T(X))$ and $\rho _{X} : \Aut(X) \to GL(H^{0}(X,\Omega ^{2}_{X/\C}))$.
     \end{center}
     Then there exists a projection $p_{X} : \im  \chi _{X} \to \im \rho _{X}$. It is known that $p_{X}$ is an isomorphism and $\im \chi _{X} \simeq \im \rho _{X}$ is a finite cyclic group. (\cite{Ni1}) In particular, for any $\alpha \in \Aut(X)$, $\chi _{X}(\alpha)$ is of finite order. Moreover, if $\ord \chi_{X}(\alpha) = n$, by the Lefschetz (1,1) theorem, every eigenvalue of $\chi _{X}(\alpha)$ is a primitive $n$-th root of unity. We denote the characteristic polynomial in a variable $T$ of a linear operator $L$ by $\varphi (L)$. Let $\Phi _{n}(T)$ be the $n$-th cyclotomic polynomial. If $\ord \chi _{X}(\alpha) =n$, since $\varphi (\chi _{X}(\alpha))$ is an integral polynomial, $\varphi (\chi _{X}(\alpha))$ is a power of $\Phi _{n}(T)$. Therefore, when $N =| \im \rho _{X}|$, the rank of $T(X)$ is a multiple of $\phi (N)$. Here $\phi$ is the Euler $\phi$ function. We say $N$ is the non-symplectic index of $X$. Because $\phi (N) \leq \rank T(X)$ and $\rank T(X)$ is at most 21, $\phi (N) \leq 20$. There exists a complex K3 surface of non-symplectic index 66, which is the maximum. (\cite{Vo}, \cite{Ko})  \\

     Assume that $k$ is an algebraically closed filed of odd characteristic $p$, $W$ is the ring of Witt vectors of $k$ and $K$ is the fraction field of $W$.\\

 Assume $X$ is a K3 surface defined over $k$.  
The second crystalline cohomology of $X$, $H^{2}_{cris}(X/W)$ is a free $W$ module of rank 22 equipped with a canonical Frobenius semi-linear endomorphism $\mathbf{F} : H^{2}_{cris}(X/W) \to H^{2}_{cris}(X/W)$. There exists a cycle map 
$$NS(X) \otimes W \to H^{2}_{cris}(X/W)$$
which preserves the lattice structure. 
Assume  $X$ is of finite height $h$. $(1 \leq h \leq 10)$ The Picard number of $X$ is at most $22-2h$. 
The orthogonal complement of the cycle map is called the crystalline transcendental lattice and it is denoted by $T_{cris}(X)$.
There exists an algebraic lifting of $X$ over $W$, 
$\mathcal{X}/W$ satisfying  the reduction map 
$$NS(\mathcal{X}\otimes K) \to NS(X)$$
is an isomorphism. (\cite{NO}, \cite{LM}, \cite{J1})
We call such a lifting a Nerno-Severi group preserving lifting.
Let us denote $\mathcal{X}_{K} = \mathcal{X} \otimes K$ and $\mathcal{X}_{\bar{K}} = \mathcal{X} \otimes \bar{K}$.
For a Neron-Severi group preserving lifting $\mathcal{X}/W$, $NS(\mathcal{X} _{\bar{K}}) = NS(\mathcal{X}_{K}) = NS(X)$
and every automorphism of $\mathcal{X}_{\bar{K}}$ is extendable to the integral model $\mathcal{X} \otimes \mathfrak{o}_{\bar{K}}$.
It follows that the reduction map 
$$\Aut(\mathcal{X}_{\bar{K}}) \to \Aut(X)$$
is injective and its image is of finite index in $\Aut(X)$.
Moreover, by the comparison theorem, we can identity $T(\mathcal{X}_{\bar{K}})$ with $T_{cris}(X)$. (\cite{BO1}, Corollary 3.7) Therefore the image of the representation 
$$\chi _{cris,X} : \Aut (X) \to O(T_{cris}(X))$$
is finite. For a K3 surface of finite height $X$, $H^{2}(X,W\mathcal{O}_{X})$ is a direct factor of $T_{cris}(X)$ and there is a projection
$H^{2}(X,W\mathcal{O}_{X}) \twoheadrightarrow H^{2}(X,\mathcal{O}_{X})$. Considering the Serre duality, we have the canonical projection
$$ \im \chi _{cris,X} \to \im \rho _{X}$$
and the non-symplectic index of $X$ is finite. Assume $N$ is the non-symplectic index of $X$ and $\rho _{X}(\alpha)$ generates $\im \rho _{X}$ for $\alpha \in \Aut(X)$. Assume $\ord \chi _{cris, X}(\alpha) = p^{a} \cdot m$, where $m$ is relatively prime to $p$. Let $\beta = \alpha ^{p^{a}}$. Then $\ord \chi _{cris, X}(\beta) = m$ and $\rho (\beta)$ also generates $\im \rho _{X}$. Since  $\ord \chi _{cris, X}(\beta)$ is relatively prime to $p$, there is an algebraic lifting $\mathcal{X}'/W$ of $X$ such that $\beta$ is liftable to $\mathcal{X}'$. (\cite{J3}, Theorem 3.2)
Let $\mathfrak{b} : \mathcal{X}' \to \mathcal{X}'$ be the lifting of $\beta$. 
The order of $\rho _{\mathcal{X}'}(\mathfrak{b})$ is $N$ and
the characteristic polynomial $\varphi (\chi _{\mathcal{X}' _{\bar{K}}} (\mathfrak{b}) ) = \varphi ( \chi _{cris,X}( \beta))$ has integer coefficients and is a power of $N$-th cyclotomic polynomial $\Phi _{N}(T)$. Therefore $\phi (N)$ divides $\rank T_{cris}(X)$ and $\phi (N) \leq 20$. The non-symplectic index of a K3 surface of finite height over $k$ is at most 66.\\

Now assume $X$ is a supersingular K3 surface over $k$. The Picard number of $X$ is 22. Let us denote the dual lattice of a lattice $L$ by $L^{*}$ and  
the discriminant group of $N$, $N^{*}/N$ by $l(N)$. 
The discriminant group $l(NS(X)) = (\Z/p)^{2\sigma}$ for an integer $\sigma$ between 1 and 10. Here $\sigma$ is called the Artin invariant of $X$. All the supersingular K3 surfaces of Artin invariant $\sigma$ form a $\sigma -1$ dimensional family and a supersingular K3 surface of Artin invariant 1 is unique up to isomorphism. The Neron-Severi lattice a supersingular K3 surface is determined by the base characteristic $p$ and the Artin invariant. 
We denote a general supersingualr K3 surface of Artin invariant $\sigma$ defined over a field of characteristic $p$ by $X_{p,\sigma}$.
 We also denote $NS(X_{p,\sigma})$ by $N_{p,\sigma}$.
 It is known that the non-symplectic index of $X _{p,\sigma}$ is a divisor of $p^{\sigma} +1$. (\cite{Ny}) In a previous work, we prove that the following. (\cite{Jc}, Theorem 3.3)\\

     \textbf{Theorem} If $p>3$, the non-symplectic index of a supersingular K3 surface of Artin invarinat 1 is $p+1$.\\

As a direct corollary, we have the following. (Loc. cit)\\

 \textbf{Corollary} If $\phi (p+1) > 20$, then $X_{p,1}$ has an automorphism which can not lifted over a ring of characteristic 0.\\

This problem of existence of a non-liftable automorphism for supersingular K3 surfaces is completely answered. (\cite{Sh}, \cite{Yu}, \cite{Br})\\

 \textbf{Theorem} Every supersingular K3 surface has a non-liftable automorphism. \\

This result is based on the study of the Salem degree of an automorphism of a K3 surface.\\

In this paper, we generalize our previous result
to obtain the non-symplecitc indexes of all supersingular K3 surfaces when the base characteristic $p>3$.

\section{Strictly characteristic subspaces and Crystalline Torelli theorem}
 In this section, we review Ogus' classification of supersingular K3 surfaces and the crystalline Torelli theorem in \cite{Og1} and \cite{Og2}.\\

Assume the characteristic of $k$, $p>3$ and $X$ is a supersingular K3 surface of Artin invariant $\sigma$ over $k$. The discriminant group $l(NS(X))$ is a $2\sigma$ dimensional vector space over the prime field $\F _{p}$ equipped with the induced quadratic form. The discriminant of this quadratic form is $(-1)^{\sigma} \Delta$ where $\Delta$ is a quadratic non-residue modulo $p$. Hence $l(NS(X))$ does not have a $\sigma$ dimensional isotropic subspace over $\F_{p}$. The cycle map $NS(X) \otimes W \hookrightarrow H^{2}_{cris}(X/W)$ gives the following chain
$$NS(X) \otimes W \subset H^{2}_{cris}(X/W) \subset NS(X)^{*} \otimes W.$$
The cokernel $H^{2}_{cris}(X/W)/(NS(X) \otimes W)$ is a $\sigma$ dimensional isotropic $k$ subspace of $(NS(X)^{*} \otimes W)/(NS(X) \otimes W) = l(NS(X)) \otimes k$. We denote $H^{2}_{cris}(X/W)/ (NS(X) \otimes W)$ by $\mathcal{K}(X)$. Note that $\mathcal{K}(X)$ determines the inclusion
$NS(X) \otimes W \subset H^{2}_{cris}(X/W)$.
We set $f = id \otimes F_{k}: l(NS(X)) \otimes k \to l(NS(X)) \otimes k$. A $\sigma$ dimensional isotropic subspace of $l(N_{p,\sigma}) \otimes k$, $\mathcal{K}$ is a characteristic subspace if $\mathcal{K} + f (\mathcal{K})$ is $\sigma +1$ dimensional. A characteristic subspace $\mathcal{K}$ is a strictly characteristic subspace if there exists no proper $\F_{p}$ subspace $M$ of $l(N_{p, \sigma})$ such that $\mathcal{K} \subset M \otimes k$. 

\begin{thm}[\cite{Og1}, Theorem 3.20]
For a supersingular K3 surface $X$, $\mathcal{K}(X)$ is a strictly characteristic subspace of $l(NS(X)) \otimes k$.
\end{thm}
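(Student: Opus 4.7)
The plan is to establish the two defining conditions of a strictly characteristic subspace—the characteristic condition $\dim_k(\mathcal{K}(X) + f(\mathcal{K}(X))) = \sigma + 1$, and strictness ($\mathcal{K}(X)$ not contained in $M_0 \otimes k$ for any proper $\F_p$-subspace $M_0 \subsetneq l(N_{p,\sigma})$)—by analyzing the integral Frobenius structure on $M := H^2_{cris}(X/W)$ in conjunction with Mazur's inequality applied to a Neron-Severi preserving lift of $X$.

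Writing $T := NS(X) \otimes W$, the Frobenius $\mathbf{F}$ acts as $p\sigma_W$ on $T$ and on $T \otimes K$, so the divided Frobenius $\mathbf{F}/p$ is a $\sigma_W$-semilinear automorphism of $T^{\ast} \otimes W$ that preserves $T$ and descends to $l(T) \otimes k$ as exactly $f = \mathrm{id} \otimes F_k$. Defining $M' := (\mathbf{F}/p)(M) \subset T^{\ast} \otimes W$, one checks that $T \subset M'$ and that $M'/T = f(\mathcal{K}(X))$, so the characteristic condition becomes $[M + M' : M] = p$. For this I would take a Neron-Severi preserving lift $\mathcal{X}/W$ with Hodge filtration $F^2 \subset F^1 \subset M$ of ranks $1, 21, 22$, and apply Mazur's inequality $\mathbf{F}(F^i) \subset p^i M$: the inclusion $(\mathbf{F}/p)(F^1) \subset M$ produces a common rank-$21$ sublattice $(\mathbf{F}/p)(F^1) \subset M \cap M'$, and a careful index computation—using also $\mathbf{F}(F^2) \subset p^2 M$—yields $[M + M' : M] \leq p$. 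The reverse inequality (i.e., $M' \not\subset M$) follows from supersingularity: if $M' = M$, then $\mathbf{F}/p$ would preserve $M$ integrally, making $(M, \mathbf{F}/p)$ a slope-$0$ F-crystal, contradicting the fact that $(M, \mathbf{F})$ is isoclinic of slope $1$ together with $\sigma \geq 1$.

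For strictness, I would argue by contradiction: if $\mathcal{K}(X) \subset M_0 \otimes k$ for some proper $M_0 \subsetneq l(N_{p,\sigma})$, then $f$-stability of $M_0 \otimes k$ traps all iterates $f^n(\mathcal{K}(X))$ inside $M_0 \otimes k$. Iterating the dimension-jump of the previous step, the sum $\mathcal{K}(X) + f(\mathcal{K}(X)) + \cdots + f^{\sigma}(\mathcal{K}(X))$ would be forced to exhaust the $2\sigma$-dimensional space $l(T) \otimes k$, giving $M_0 = l(N_{p,\sigma})$—a contradiction. The \emph{main obstacle} I anticipate is upgrading the bound $[M + M' : M] \leq p$ to an exact equality and then ensuring the iteration genuinely adds one new dimension at each step; both rely on supersingularity (ruling out $\mathbf{F}/p$-invariant proper sublattices of $T^{\ast} \otimes W$ strictly above $T$) and on the anisotropy of the discriminant form, whose discriminant $(-1)^{\sigma}\Delta$ with $\Delta$ a quadratic non-residue mod $p$ is crucial for precluding exotic isotropic $f$-stable subspaces that might absorb the Frobenius orbit of $\mathcal{K}(X)$.
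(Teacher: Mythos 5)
The paper does not prove this statement; it is quoted verbatim from Ogus (\cite{Og1}, Theorem 3.20), so your proposal can only be judged on its own merits. The first half is essentially sound and is in fact the standard argument: with $T = NS(X)\otimes W$ and $M = H^{2}_{cris}(X/W)$, the divided Frobenius $\mathbf{F}/p$ is a $\sigma_W$-semilinear isometry preserving $T$ and $T^{*}\otimes W$ and inducing $f$ on $l(NS(X))\otimes k$, and the characteristic condition is equivalent to $[M + (\mathbf{F}/p)M : M] = p$. You can get this more directly than you propose: $(M+M')/M \cong (\mathbf{F}M + pM)/pM = \im(\mathbf{F}\otimes k)$, and Mazur's theorem identifies $\ker(\mathbf{F}\otimes k)$ with $F^{1}\otimes k$ of rank $21$, so the index is exactly $p$ in one step --- no lifting and no separate argument that $M'\neq M$ are needed (the Hodge--de Rham degeneration and torsion-freeness hold for $X$ itself).

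The strictness argument, however, has a genuine gap: it is circular. You claim that ``iterating the dimension-jump'' forces $\mathcal{K}+f\mathcal{K}+\cdots+f^{\sigma}\mathcal{K}$ to exhaust $l(NS(X))\otimes k$, but the characteristic condition only controls the first jump; nothing in what you proved prevents the chain $\sum_{i\le j} f^{i}\mathcal{K}$ from stabilizing early inside $M_{0}\otimes k$. Indeed, characteristic subspaces that are \emph{not} strictly characteristic exist --- that is precisely why Ogus introduces the stronger notion --- so strictness cannot follow from the characteristic condition alone. The missing mechanism is this: since $\mathcal{K}$ is maximal isotropic, $\mathcal{K}=\mathcal{K}^{\perp}$, so if $\mathcal{K}\subset M_{0}\otimes k$ with $M_{0}\subsetneq l(NS(X))$ an $\F_p$-subspace, then $0\neq M_{0}^{\perp}\otimes k\subset\mathcal{K}^{\perp}=\mathcal{K}$, i.e.\ $\mathcal{K}$ contains a nonzero $\F_p$-rational vector $v$. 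Any lift of $v$ to $NS(X)^{*}$ then lies in $M$ and satisfies $\mathbf{F}x=px$, producing a Tate class in $H^{2}_{cris}(X/W)^{\mathbf{F}=p}$ not contained in $NS(X)\otimes\Z_p$; this contradicts the fact that the cycle class map identifies $NS(X)\otimes\Z_p$ with the full Tate module of the supersingular K3 crystal (the Tate/Artin input, which is the real geometric content of the theorem). Your appeal to the quadratic non-residue discriminant is not the right tool here: it rules out a $\sigma$-dimensional rational isotropic subspace (i.e.\ $\mathcal{K}$ itself being rational), but not containment of $\mathcal{K}$ in a larger proper rational subspace, which is what strictness must exclude.
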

\begin{rem}
Our definition of $\mathcal{K}(X)$ above is slightly different from one given in \cite{Og1}. But all the argument in this paper work for both definitions. 
\end{rem}
Assume $\mathcal{K}$ is a strictly characteristic subspace of $l(N_{p, \sigma}) \otimes k$. Then $l_{\mathcal{K}} = \cap _{i=0} ^{\sigma -1} f^{i} \mathcal{K}$ is a line in $l(N_{p,\sigma}) \otimes k$. Moreover we have 
\begin{center}
$\sum _{i=0}^{\sigma -1} f ^{-i} (l_{\mathcal{K}}) = \mathcal{K}$ and 
$\sum _{i=0}^{2\sigma -1} f ^{-i} (l_{\mathcal{K}}) = l(N_{p, \sigma}) \otimes k$.
\end{center} 
 Let $v$ be a non zero vector in $l_{\mathcal{K}}$ and we denote $v_{i} = f ^{1-i} (v)$. Then $\{ v_{1} , \cdots , v_{\sigma} \}$ is a basis of 
$\mathcal{K}$ and $\{ v_{1} , \cdots , v_{2\sigma} \}$ is a basis of $l(N_{p,\sigma}) \otimes k$. Because $l(N_{p,\sigma})$ is non-degenerated, $v_{1} \cdot v_{\sigma +1} \neq 0$. After a suitable scalar multiplication, we may assume $v_{1} \cdot v_{\sigma +1} =1$. Here $v_{1}$ is uniquely determined up to $(p^{\sigma}+1)$-th roots of unity multiplication. Now we put $a_{i} = v_{1} \cdot v_{\sigma +1+i} \in k$. $(\i = 1, \cdots , \sigma -1 )$
The intersection matrix of $l(N_{p,\sigma})$ in terms of the basis $ v_{1} , \cdots , v_{2\sigma}$ is $ \left (
\begin{array}{cc}
0 & A \\
A^{t} & 0 \\
\end{array} \right ) $, where
\begin{equation}\label{mat}
   A = \left (
\begin{array}{cccccc}
1 & a_{1} & a_{2} & a_{3} & \cdots & a_{\sigma -1}\\
0 & 1 & F_{k}^{-1}(a_{1}) & F_{k}^{-1}(a_{2}) & \cdots & F_{k}^{-1} (a_{\sigma -2})\\
0 & 0 & 1 & F_{k}^{-2} (a_{1}) & \cdots &  F_{k}^{-2}(a_{\sigma -3}) \\
\vdots & \vdots & \vdots & \vdots &  & \vdots \\
0 & 0 & 0 & 0 & \cdots & 1
\end{array}
 \right ). 
\end{equation}

If we replace $v_{1}$ by $\xi v_{1}$ $(\xi ^{p^{\sigma}+1} =1)$, then $a_{i}$ is replaced by $\xi ^{p^{\sigma +i}+1} a_{i}$. Hence we have a map 
$$\Psi : \mathcal{K} \mapsto (a_{1} , \cdots , a_{\sigma -1}) \in \mathbb{A} ^{\sigma -1} / \mu _{p^{\sigma}+1} (k) .$$

\begin{thm}[\cite{Og1}, Theorem 3.21] The map $\Psi$ is bijective from the set of isomorphic classes of $(l(N_{p, \sigma}), \mathcal{K})$ to $\mathbb{A} ^{\sigma -1} / \mu _{p^{\sigma}+1}(k)$.

\end{thm}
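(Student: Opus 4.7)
The plan is to verify that $\Psi$ is well-defined on isomorphism classes, injective, and surjective. Well-definedness is essentially the content of the paragraph preceding the theorem: the normalization $v_{1}\cdot v_{\sigma+1}=1$ determines $v_{1}$ up to multiplication by $\xi\in\mu_{p^{\sigma}+1}(k)$, and this ambiguity scales $a_{i}$ by $\xi^{p^{\sigma+i}+1}$, so the class $[a_{1},\ldots,a_{\sigma-1}]\in\mathbb{A}^{\sigma-1}/\mu_{p^{\sigma}+1}(k)$ depends only on $\mathcal{K}$. Moreover, any isometry $\psi:(l(N_{p,\sigma}),\mathcal{K})\to(l(N_{p,\sigma}),\mathcal{K}')$ is by definition $\F_{p}$-linear, hence (after extending scalars) commutes with $f=\mathrm{id}\otimes F_{k}$, so it carries $l_{\mathcal{K}}$ to $l_{\mathcal{K}'}$ and preserves all intersection numbers; thus $\Psi$ factors through isomorphism classes of pairs.

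For injectivity, suppose $\Psi(\mathcal{K})=\Psi(\mathcal{K}')$. After rescaling the generator of $l_{\mathcal{K}'}$ within $\mu_{p^{\sigma}+1}(k)$, I may assume $a_{i}=a_{i}'$ for every $i$, so the bases $\{v_{j}\}$ and $\{v_{j}'\}$ of $l(N_{p,\sigma})\otimes k$ have identical intersection matrices. Define the $k$-linear map $\phi:l(N_{p,\sigma})\otimes k\to l(N_{p,\sigma})\otimes k$ by $\phi(v_{j})=v_{j}'$; it is tautologically a $k$-isometry. The crucial claim is that $\phi$ commutes with $f$. For $j\geq 2$, $f(v_{j})=v_{j-1}$, so $\phi\circ f(v_{j})=v_{j-1}'=f\circ\phi(v_{j})$. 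For $j=1$, the semilinearity relation $\langle f(u),f(w)\rangle=F_{k}(\langle u,w\rangle)$ applied to $u=v_{1}$, $w=v_{j'}$ (with $j'\geq 2$) gives $\langle f(v_{1}),v_{j'-1}\rangle=F_{k}(\langle v_{1},v_{j'}\rangle)$, so the coordinates of $f(v_{1})$ in the basis $\{v_{j}\}$ are determined (via the non-degenerate Gram matrix) entirely by the entries of (\ref{mat}), together with the isotropy $\langle f(v_{1}),f(v_{1})\rangle=0$; the same is true of $f(v_{1}')$ in $\{v_{j}'\}$ with identical coefficients. Hence $\phi\circ f(v_{1})=f\circ\phi(v_{1})$, so $\phi$ preserves the fixed subspace $(l(N_{p,\sigma})\otimes k)^{f}=l(N_{p,\sigma})$ and restricts there to an $\F_{p}$-isometry sending $\mathcal{K}$ to $\mathcal{K}'$.

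For surjectivity, given $(a_{1},\ldots,a_{\sigma-1})\in k^{\sigma-1}$, I build an abstract pair $(V,\mathcal{K})$ with $V\cong l(N_{p,\sigma})$ realizing these invariants. Take a $2\sigma$-dimensional $k$-vector space $W$ with basis $e_{1},\ldots,e_{2\sigma}$ and the intersection form (\ref{mat}), and endow it with the Frobenius-semilinear operator $f_{W}$ sending $e_{j}\mapsto e_{j-1}$ for $j\geq 2$, with $f_{W}(e_{1})$ forced by the requirement that $f_{W}$ respect the form semilinearly (subject to $f_{W}(e_{1})$ being isotropic). The fixed subspace $V:=W^{f_{W}}$ is an $\F_{p}$-vector space of dimension $2\sigma$ with an inherited nondegenerate quadratic form, and $\mathcal{K}:=\Span_{k}(e_{1},\ldots,e_{\sigma})\subset V\otimes k=W$ is a strictly characteristic subspace, the strictness being immediate since $\{e_{1},\ldots,e_{2\sigma}\}$ is a basis and therefore lies in no proper $\F_{p}$-subspace. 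The main obstacle is the identification $V\cong l(N_{p,\sigma})$: both are non-degenerate $\F_{p}$-quadratic forms of dimension $2\sigma$, classified up to isomorphism by their discriminant class in $\F_{p}^{*}/(\F_{p}^{*})^{2}=\{1,\Delta\}$, and one must verify that the Frobenius twist produces the non-square class $(-1)^{\sigma}\Delta$ characteristic of $l(N_{p,\sigma})$. This is a delicate arithmetic computation, since the $k$-determinant $\det\bigl(\begin{smallmatrix}0&A\\A^{t}&0\end{smallmatrix}\bigr)=(-1)^{\sigma}$ is trivially a square over the algebraically closed $k$, and the nontriviality of the discriminant emerges only from the $\F_{p}$-structure; equivalently, one uses that the absence of a $\sigma$-dimensional $\F_{p}$-isotropic subspace (noted at the start of the section) pins down $V$ uniquely among $2\sigma$-dimensional forms.
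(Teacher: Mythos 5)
The paper offers no proof of this statement --- it is imported verbatim from Ogus (\cite{Og1}, Theorem 3.21) --- so there is no in-paper argument to measure yours against; I am judging the proposal on its own merits. The overall shape (well-definedness, injectivity via an intertwining isometry built from matching Gram matrices, surjectivity via an abstract construction) is the right one. But there are two gaps, one reparable on the spot and one substantive.

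The reparable one is in injectivity: you assert that the coordinates of $f(v_{1})$ in the basis $\{v_{j}\}$ are ``determined entirely'' by the semilinearity relations together with isotropy. The semilinearity relations only give the $2\sigma-1$ pairings $\langle f(v_{1}),v_{i}\rangle$ for $i\le 2\sigma-1$, leaving one unknown $x=\langle f(v_{1}),v_{2\sigma}\rangle$, and the isotropy condition is a priori \emph{quadratic} in $x$, so it could leave two candidates and the commutation $\phi\circ f=f\circ\phi$ would not follow. It does pin $x$ down, but only because of a computation you omit: with $G=\bigl(\begin{smallmatrix}0&A\\A^{t}&0\end{smallmatrix}\bigr)$ the entry $(G^{-1})_{2\sigma,2\sigma}$ vanishes (since $A$ is unitriangular), so the quadratic degenerates to a linear equation whose leading coefficient is $2\,F_{k}(v_{1}\cdot v_{\sigma+1})=2\neq 0$ because $p$ is odd. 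You need to say this, or the key step of the injectivity proof is unproved.

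The substantive gap is the one you flag yourself and then leave open: surjectivity. Your construction produces \emph{some} nondegenerate $2\sigma$-dimensional $\F_{p}$-quadratic space $V=W^{f_{W}}$ carrying a strictly characteristic subspace, but the theorem requires $V\cong l(N_{p,\sigma})$, i.e.\ that the discriminant is the non-square class $(-1)^{\sigma}\Delta$ rather than the split class. Over $\F_{p}$ there are exactly two isomorphism classes of nondegenerate forms in dimension $2\sigma$, and only one of them is $l(N_{p,\sigma})$; declaring this ``a delicate arithmetic computation'' and stopping is precisely where the real content of the theorem lies. The standard way to close it is to prove that a characteristic subspace cannot exist on the neutral form, i.e.\ that a $\sigma$-dimensional $\F_{p}$-rational totally isotropic subspace is incompatible with $\dim(\mathcal{K}+f\mathcal{K})=\sigma+1$ plus strictness; without some such argument the map $\Psi$ has only been shown to land in, not to surject onto, $\mathbb{A}^{\sigma-1}/\mu_{p^{\sigma}+1}(k)$. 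Two smaller omissions in the same step: you should verify that your $\mathcal{K}=\Span_{k}(e_{1},\dots,e_{\sigma})$ is actually characteristic (i.e.\ $f_{W}(e_{1})\notin\mathcal{K}$, which follows from $\langle f_{W}(e_{1}),e_{\sigma}\rangle=1$ while $\mathcal{K}$ is totally isotropic), and that $f_{W}$ is bijective so that $\dim_{\F_{p}}W^{f_{W}}=2\sigma$.
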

We put $\Delta _{X} = \{ v \in NS(X) | v \cdot v = -2 \}$. For any $v \in \Delta _{X}$, let 
   $$s _{v} : w \mapsto w + (v \cdot w) v \in O(NS(X))$$ 
be the reflection along the line of $v$. The Wyle group of $NS(X)$ is the subgroup of $O(NS(X))$ generated by $s_{v}$ $(v \in \Delta _{X})$ and $-id$. We denote the Wyle group of $X$ by $W_{X}$. The real quadratic space $NS(X) \otimes \R$ is a hyperboic space of rank $(1,21)$. The positive cone of $NS(X) \otimes \R$, $\mathcal{P}_{X} = \{ v \in NS(X) \otimes \R | v \cdot v >0 \}$ has two connected components. Let $\mathcal{C}_{X}$ be the set of connected components of $\mathcal{P}_{X} - \cup _{v \in \Delta _{X}} <v> ^{\bot} ( \subset NS(X) \otimes \R )$. The ample cone of $X$, $\mathcal{A}_{X}$ is an element of $\mathcal{C}_{X}$. The Wyle group $W_{X}$ acts simply and transitively on $\mathcal{C}_{X}$ by the canonical action. The automorphism group of $X$, $\Aut(X)$ acts naturally on $NS(X)$ and $l(NS(X))$. We denote these representations of $\Aut(X)$ by 
\begin{center}
$\lambda _{X} : \Aut (X) \to O(NS(X))$ and $\nu _{X} : \Aut(X) \to O(l(NS(X)))$.

\end{center}
For an autormophism $\alpha \in \Aut(X)$, $\lambda _{X}(\alpha)$ preserves $\mathcal{A}_{X}$ and $\nu (\alpha)$ preserves $\mathcal{K}(X)$.

\begin{thm}[\cite{Og2}, Theorem II, III, Crystalline Torelli theorem] ${ }$ \\
(1) If $g \in O(NS(X))$ preserves $\mathcal{A}_{X}$ and $\mathcal{K}(X)$, then there exists a unique $\alpha \in \Aut(X)$ such that $\lambda _{X}(\alpha) = g$.

(2) If $\mathcal{K}$ is a strictly characteristic space of $l(N_{p, \sigma})$, there exists a unique supersingular K3 surface $X$  of Artin invariant $\sigma$ up to isomorphism over $k$ such that $(l(NS(X)), \mathcal{K}(X))$ is isomorphic to $(l(N_{p, \sigma}) , \mathcal{K})$.
 
\end{thm}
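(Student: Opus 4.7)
The plan is to upgrade the period-theoretic bijection $\Psi$ of Theorem 2.3 to a genuine moduli isomorphism: specifying the triple $(NS(X), \mathcal{K}(X), \mathcal{A}_X)$ should determine $X$ up to unique isomorphism, and every isometry of this data should arise from a geometric automorphism. Throughout, the strictly characteristic subspace $\mathcal{K}(X)$ plays in characteristic $p$ the role that the complex Hodge line $H^0(X, \Omega^2)$ plays in the classical Torelli theorem, encoding precisely the position of $H^2_{cris}(X/W)$ inside $NS(X)^* \otimes W$.

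\textbf{Proof of (2).} For existence, I would use Artin's coarse moduli space $\mathcal{M}_\sigma$ of supersingular K3 surfaces of Artin invariant $\sigma$, which is $(\sigma-1)$-dimensional, together with the period morphism
\[
\Pi : \mathcal{M}_\sigma \longrightarrow \mathbb{A}^{\sigma-1}/\mu_{p^\sigma+1}(k), \qquad X \longmapsto \Psi(\mathcal{K}(X)).
\]
The plan is to show $\Pi$ is étale by computing its differential at each closed point via Kodaira--Spencer and the crystalline Gauss--Manin connection, then exhibit one point in its image by an explicit construction such as the Kummer surface of a product of supersingular elliptic curves (which realises Artin invariant $1$), and conclude surjectivity by étaleness together with the dimension match on both sides. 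Uniqueness in (2) will follow from (1): an abstract isometry $\phi : (l(NS(X_1)), \mathcal{K}(X_1)) \to (l(NS(X_2)), \mathcal{K}(X_2))$ lifts to an isometry $\tilde\phi : NS(X_1) \to NS(X_2)$ (since both Neron-Severi lattices are isomorphic to $N_{p,\sigma}$), is adjusted by a suitable element of $W_{X_2}$ to send $\mathcal{A}_{X_1}$ to $\mathcal{A}_{X_2}$, and then (1) converts the resulting lattice isomorphism into a geometric isomorphism $X_1 \simeq X_2$.

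\textbf{Proof of (1).} Given $g \in O(NS(X))$ preserving $\mathcal{A}_X$ and $\mathcal{K}(X)$, I would reinterpret $g$ as a re-marking of $X$. Twisting the tautological marking of $X$ by $g$ produces another marked supersingular K3 surface $X'$ whose invariants $(NS(X'), \mathcal{K}(X'), \mathcal{A}_{X'})$ are identified with those of $X$; by the bijectivity of $\Pi$ established above, $X'$ is isomorphic to $X$, and the induced isomorphism is the desired $\alpha \in \Aut(X)$ satisfying $\lambda_X(\alpha) = g$. Uniqueness comes from the faithfulness of $\lambda_X$ on the subgroup preserving an ample class: if $\alpha_1, \alpha_2 \in \Aut(X)$ both induce $g$, then $\alpha_1\alpha_2^{-1}$ acts trivially on $NS(X)$ and fixes an ample polarisation, hence is the identity by standard rigidity for polarised K3 surfaces.

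\textbf{Main obstacle.} The technical heart is the étaleness of $\Pi$, which is an infinitesimal Torelli statement in characteristic $p$: one must show that the tangent map at a closed point $[X] \in \mathcal{M}_\sigma$, given by contracting the Kodaira--Spencer class against the crystalline period, identifies $H^1(X, T_X)$ with the tangent space to the period space at $\Psi(\mathcal{K}(X))$. This requires a careful analysis of the crystalline Gauss--Manin connection on the cohomology of a family of supersingular K3 surfaces and exploits crucially the strictness of the characteristic subspace (which guarantees non-degeneracy of the pairing $v \cdot v_{\sigma+1} = 1$ appearing in the intersection matrix (\ref{mat})). A secondary difficulty is producing the base point: one must verify that an explicit construction such as the Kummer example really produces a strictly characteristic subspace with the right numerical data, so that étaleness can propagate across $\mathcal{M}_\sigma$.
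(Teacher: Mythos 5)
The paper does not prove this statement at all: it is Ogus's crystalline Torelli theorem, imported from \cite{Og2} (Theorems II and III there) and used as a black box throughout. So there is no in-paper argument to compare yours against; what you have written is a sketch of a proof of Ogus's theorem itself. Your outline does follow the broad shape of Ogus's actual strategy (a period map to the space of characteristic subspaces, \'etaleness of that map, and anchoring it at a special point via Kummer surfaces), but as a proof it has several genuine gaps.

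First, \'etaleness plus equality of dimensions does not give surjectivity of $\Pi$: an \'etale map between equidimensional varieties can miss a dense open subset. Ogus's surjectivity argument relies on the properness of the space of characteristic subspaces together with a specialization argument showing that the image of the period map is closed under specialization, reducing to the Artin invariant $1$ locus where the (super)special Kummer construction supplies the fibre; none of this is automatic from the infinitesimal computation. Second, your argument is circular: you defer the injectivity of $\Pi$ (the uniqueness in (2)) to part (1), but your proof of part (1) invokes ``the bijectivity of $\Pi$ established above,'' which includes exactly that injectivity. Third, the uniqueness step in (1) is not ``standard rigidity for polarised K3 surfaces'': for a supersingular K3 surface one must rule out nontrivial automorphisms acting trivially on the full rank-$22$ N\'eron--Severi lattice, and this faithfulness of $\lambda_X$ is itself part of the content of the Torelli package (Ogus deduces it from the structure of the K3 crystal, not from polarised rigidity alone). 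None of this affects the paper, which only needs the statement as a citation, but as a standalone proof your sketch would not close.
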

By the theorem, we may regard the automorphism group $\Aut(X)$ as a subgroup of $O(NS(X))$.
Two representations of $\Aut(X)$, $\rho _{X}$ and $\nu _{X}$ are isomorphic (\cite{Ny}, \cite{J2}, \cite{BG}), so $\im \rho _{X} \simeq \im \nu _{X}$ is a finite cyclic group. Since, for any $\alpha \in \Aut(X)$, $\nu (\alpha)$ is rational over $\F_{p}$ and preserves $\mathcal{K}(X)$, $\nu (\alpha) (l_{\mathcal{K}(X)}) = l_{\mathcal{K}(X)}$. And $v_{1} , \cdots , v_{2\sigma}$ are all eigen vectors of $\nu (\alpha)$. The character of $\Aut(X)$ on $H^{0}(X, \Omega _{X/k} ^{2})$ is isomorphic to the character of $\Aut(X)$ on the 1 dimensional space $<v_{\sigma +1}>$. (\cite{Og1}) Hence the character of $\Aut(X)$ on $H^{2}(X, \mathcal{O}_{X})$ is isomorphic to the character of $\Aut(X)$ on $<v_{1}>$. If $\nu (\alpha)(v_{1}) = \xi v_{1}$, $F_{k}^{-\sigma}(\xi) = \xi ^{p^{-\sigma}} =\xi ^{-1}$ and $\nu (\alpha) (v_{\sigma +1}) = \xi ^{-1} v_{\sigma +1}$.  

\begin{lem}[c.f. \cite{BG}, Corollary 3.8]\label{lemm}
 Assume $g \in O (l(NS(X)))$ preserves $\mathcal{K}(X)$. Then there exists $\alpha \in \Aut(X)$ such that $\nu (\alpha) =g$.

\end{lem}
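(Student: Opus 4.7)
The plan is to apply part (1) of the crystalline Torelli theorem (Theorem 2.3). Given $g \in O(l(NS(X)))$ preserving $\mathcal{K}(X)$, I aim to construct a lift $\tilde{g} \in O(NS(X))$ that preserves both $\mathcal{A}_X$ and $\mathcal{K}(X)$; the theorem then produces a unique $\alpha \in \Aut(X)$ with $\lambda_X(\alpha) = \tilde{g}$, and by construction $\nu_X(\alpha) = g$.

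First I would lift $g$ to some $\tilde{g}_0 \in O(NS(X))$ via surjectivity of the natural reduction map $O(NS(X)) \twoheadrightarrow O(l(NS(X)))$ for the lattice $N_{p,\sigma}$. This follows from Nikulin's theory of discriminant forms: the rank $22$ of $NS(X)$ is far larger than the dimension $2\sigma \le 20$ of the discriminant group, which supplies the required extension of isometries. Since $\tilde{g}_0$ induces $g \otimes \mathrm{id}$ on $l(NS(X)) \otimes k$, it automatically preserves $\mathcal{K}(X)$. Next I would adjust $\tilde{g}_0$ to preserve the ample cone. The subgroup $W'_X \subset W_X$ generated by reflections $s_v$ with $v \cdot v = -2$ acts trivially on $l(NS(X))$ (because $s_v(w) - w = (v\cdot w)\,v \in NS(X)$ for any $w \in NS(X)^*$) and acts simply transitively on the set of chambers inside each connected component of $\mathcal{P}_X$. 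Hence, once $\tilde{g}_0$ preserves the component containing $\mathcal{A}_X$, a suitable product of $(-2)$-reflections carries $\tilde{g}_0(\mathcal{A}_X)$ back onto $\mathcal{A}_X$ without altering the image in $O(l(NS(X)))$.

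The main obstacle is the possibility that $\tilde{g}_0$ swaps the two components of the positive cone $\mathcal{P}_X$: composing with $-\mathrm{id}$ would correct this but would replace $g$ by $-g$ in $O(l(NS(X)))$, since $-\mathrm{id}$ does not lie in the kernel of the reduction map for odd $p$. To bypass this, I would exhibit a component-reversing element of $O(NS(X))$ that does lie in the kernel; a natural candidate is the reflection $s_u$ associated to a vector $u \in NS(X)$ with $u \cdot u = +2$, which sends $u \mapsto -u$ (hence swaps components of $\mathcal{P}_X$) while acting trivially on $l(NS(X))$ for the same divisibility reason as above. The existence of such a $u$ in $N_{p,\sigma}$ can be extracted from its explicit description (or from the general theory of indefinite even lattices of large rank), and this is the step where care is really required. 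With this handled, the adjusted $\tilde{g}$ preserves $\mathcal{A}_X$ and $\mathcal{K}(X)$ and still lifts $g$, and the crystalline Torelli theorem yields the desired $\alpha \in \Aut(X)$.
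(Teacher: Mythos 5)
Your proposal follows essentially the same route as the paper: lift $g$ through the surjection $O(NS(X)) \to O(l(NS(X)))$ (Nikulin), correct the ample cone by $(-2)$-reflections, which act trivially on the discriminant group, and handle a possible swap of the two components of $\mathcal{P}_X$ with a reflection in a vector of square $+2$ rather than with $-\mathrm{id}$. The one detail you flag but leave open --- the existence of $u \in NS(X)$ with $u \cdot u = 2$ --- is supplied in the paper by a local-global argument: $NS(X)\otimes\Z_q$ represents $2$ at every prime $q$ (unimodularity away from $p$, and a unimodular part of rank at least $2$ at $p$), hence some lattice in the genus of $NS(X)$ represents $2$, and the class number of $NS(X)$ is $1$.
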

\begin{proof}
Because the signature of $NS(X)$ is $(1,21)$ and the length of $l(NS(X))$ is at most 20,
the class number of $NS(X)$ is 1 and the reduction map $O(NS(X)) \to O(l(NS(X)))$ is surjective. (\cite{Ni}, Theorem 1.14.2)
There exists $\mathfrak{g} \in O(NS(X))$ such that $\mathfrak{g} | l(NS(X)) =g$. Let $\mathfrak{g} (\mathcal{A}_{X}) = \mathcal{B}\in \mathcal{C}_{X}$. There also exists a unique $\varphi \in W_{X}$ such that $\varphi (\mathcal{B}) =\mathcal{A}_{X}$. 
If $\mathcal{B}$ is in the same connected component of $\mathcal{P}_{X}$ with $\mathcal{A}_{X}$, we can write
$\varphi = s _{w_{1}} \circ s_{w_{2}} \circ \cdots \circ s_{w_{n}}$ for $w_{i} \in \Delta _{X}$.
Let $M$ be the rank 1 sub lattice of $NS(X)$ generated by $w \in \Delta _{X}$. Then $l(M) = \Z/2$ and $l(M^{\bot}) = \Z/2 \oplus l(NS(X))$.
Since $s_{w} | M^{\bot} = id$, $s_{w}$ induces the identity map on $l(NS(X))$. It follows that $\varphi \circ \mathfrak{g}$ preserves $\mathcal{A}_{X}$ and $\mathcal{K}(X)$, so by the crystalline Torelli theorem, $\varphi \circ \mathfrak{g} \in \Aut(X)$. And $\nu(\varphi \circ \mathfrak{g}) = g$.\\

Now assume $\mathcal{B}$ is in the other connected component of $\mathcal{P}_{X}$ with $\mathcal{A}_{X}$. 
The signature of $NS(X)$ is $(1,21)$, there exists a vector in $NS(X) \otimes \R$ of self intersection 2. If $q$ is a prime number different from $p$, 
$NS(X) \otimes \Z_{q}$ is even unimodular of rank 22, so it contains a vector of self intersection 2. The length of the unimodular part of $NS(X) \otimes \Z_{p}$ is at least 2, so $NS(X) \otimes \Z_{p}$ has a vector of self intersection 2. By the Hasse principle, there exists a lattice in the genus of $NS(X)$ which contains a vector of self intersection 2. But the class number of $NS(X)$ is 1, so $NS(X)$ contains a vector $u$ such that $u \cdot u =2$. 
Let 
$$t_{u} : w \mapsto w - (w \cdot u)u \in O(NS(X))$$ 
be the reflection along the line of $u$. Then $t_{u}$ exchanges two connected components of $\mathcal{P}_{X}$ and $t_{u}| l(NS(X)) =id$. And $\mathcal{B}' = t_{u}(\mathcal{B})$ is in the same connected component of $\mathcal{P}_{X}$ with $\mathcal{A}_{X}$. If $\varphi (\mathcal{B}') = \mathcal{A}_{X}$ $(\varphi \in W_{X})$, $\varphi$ is the composition of reflections along -2 vectors. It follows that $\varphi \circ t_{u} \circ \mathfrak{g} \in \Aut(X)$ and
$\nu(\varphi \circ t_{u} \circ \mathfrak{g}) = g$.
\end{proof}

\begin{cor}
The non-symplectic index of $X$ is even.
\end{cor}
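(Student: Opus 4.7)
The plan is to produce an automorphism $\alpha\in\Aut(X)$ whose image $\rho_X(\alpha)$ has order $2$; since $\im\rho_X$ is cyclic, this forces its order, the non-symplectic index, to be even. The natural candidate is an $\alpha$ with $\nu_X(\alpha)=-\mathrm{id}$ on $l(NS(X))$.

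First, I observe that $-\mathrm{id}\in O(l(NS(X)))$, being a scalar, preserves every subspace, and in particular preserves the strictly characteristic subspace $\mathcal{K}(X)$. Therefore Lemma \ref{lemm} applies directly and produces an $\alpha\in\Aut(X)$ with $\nu_X(\alpha)=-\mathrm{id}$.

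Second, I translate this into the non-symplectic character. Using the basis $v_1,\dots,v_{2\sigma}$ of $l(N_{p,\sigma})\otimes k$ built from a generator $v_1$ of $l_{\mathcal{K}(X)}$, the character of $\Aut(X)$ on $H^{0}(X,\Omega^{2}_{X/k})$ is, as recalled just before Lemma \ref{lemm}, identified with the character on the line $\langle v_{\sigma+1}\rangle$. Since $\nu_X(\alpha)=-\mathrm{id}$ sends $v_{\sigma+1}$ to $-v_{\sigma+1}$, the element $\rho_X(\alpha)\in\im\rho_X$ acts as multiplication by $-1$ on $H^{0}(X,\Omega^{2}_{X/k})$, hence has order exactly $2$.

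Consequently $\im\rho_X$ contains an element of order $2$. Because $\im\rho_X$ is cyclic, its order is divisible by $2$, so the non-symplectic index of $X$ is even. There is no real obstacle here beyond checking that Lemma \ref{lemm} genuinely applies to $g=-\mathrm{id}$, which is immediate.
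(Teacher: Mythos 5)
Your proof is correct and follows the paper's own argument: both apply Lemma \ref{lemm} to $g=-\mathrm{id}$, which preserves $\mathcal{K}(X)$ since it is a scalar, to conclude $-\mathrm{id}\in\im\nu_X$ and hence that the (cyclic) image has even order. You merely make explicit the translation to $\rho_X$ via the eigenvalue on $\langle v_{\sigma+1}\rangle$, which the paper leaves to the isomorphism $\im\rho_X\simeq\im\nu_X$.
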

\begin{proof}
Since $-id \in O(l(NS(X))$ preserves every sub space of $l(NS(X)) \otimes k$, $-id \in \im \nu_{X}$ and the order of $-id$ is 2. 
\end{proof}

     \section{Main result }
Let $k$ be an algebraically closed field of chracteristic $p>3$ and $X$ be a supersingular K3 surface of Artin invariant $\sigma$ over $k$.
\begin{thm}
The non-symplectic index of $X$ over $k$ is as in the Table 1.
\end{thm}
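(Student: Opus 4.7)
I will translate the non-symplectic index of $X$ into an explicit cyclotomic gcd determined by the Ogus period data $(a_1,\ldots,a_{\sigma-1})$ attached to $X$ by $\Psi$, and then read off the entry of Table~1 from the arithmetic of these gcds. The main tools are Lemma~\ref{lemm} and the explicit Frobenius-generated basis $v_1,\ldots,v_{2\sigma}$ from the parametrisation recalled above.

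\textbf{Reduction to a stabiliser.} By Lemma~\ref{lemm} the image $\im\nu_X$ coincides with the full stabiliser of $\mathcal{K}(X)$ in $O(l(NS(X)))$. Any such $g$ is $\F_p$-rational, commutes with $f$, and fixes the line $l_{\mathcal{K}(X)}$; consequently $g(v_1) = \xi v_1$ for some $\xi \in k^*$ and then $g(v_i) = \xi^{p^{1-i}} v_i$ for every $i$. The normalisation $v_1 \cdot v_{\sigma+1} = 1$ forces $\xi \in \mu_{p^\sigma+1}(k)$, reproducing Nygaard's divisibility.

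\textbf{Cyclotomic conditions.} Preserving the remaining non-zero entries $v_i \cdot v_{\sigma+j} = F_k^{1-i}(a_{j-i})$ of the matrix~\eqref{mat} imposes, upon setting $l := j-i$, the equation $\xi^{p^{\sigma+l}+1} = 1$ for each $l \in \{1,\ldots,\sigma-1\}$ with $a_l \neq 0$. Conversely, Lemma~\ref{lemm} lifts every $\xi$ satisfying all these congruences to a genuine automorphism of $X$, so
\[
\text{non-symplectic index of }X \;=\; \gcd\Bigl(p^\sigma+1,\; \{p^{\sigma+l}+1 : 1 \le l \le \sigma-1,\; a_l \neq 0\}\Bigr).
\]
Using $p^{\sigma+l}+1 \equiv 1 - p^l \pmod{p^\sigma+1}$, this rewrites as $\gcd\bigl(p^\sigma+1,\{p^l-1 : a_l \neq 0\}\bigr)$, which can be evaluated via the standard identities for $\gcd(p^a \pm 1, p^b \pm 1)$.

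\textbf{Main obstacle.} What remains is to identify, for each Artin invariant $\sigma$ and prime $p>3$, the support set $S(X) := \{l : a_l \neq 0\}$ realised by $X$, and to check that the resulting gcd matches the corresponding entry of Table~1. Two sanity checks are built into the formula: $S(X) = \emptyset$ yields the maximum $p^\sigma+1$, while the generic $S(X) = \{1,\ldots,\sigma-1\}$ forces the gcd to divide $\gcd(p^\sigma+1, p-1) = 2$, consistent with the corollary that the index is even. The combinatorial analysis of which $S(X)$ actually occur, leveraging the surjectivity of $\Psi$ together with the strict characteristicness and the discriminant $(-1)^\sigma\Delta$ of $l(N_{p,\sigma})$, should determine each entry of Table~1, and I expect this bookkeeping---rather than the group-theoretic translation above---to be the main technical content of the proof.
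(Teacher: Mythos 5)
Your setup coincides with the paper's: both arguments diagonalise $\nu_X(\alpha)$ in the Frobenius-generated basis $v_1,\ldots,v_{2\sigma}$, extract the eigenvalue $\xi$ on $v_1$, derive the constraint $\xi^{p^{\sigma+l}+1}=1$ from each non-vanishing $a_l$, and use Lemma~\ref{lemm} for the converse. Your gcd reformulation is a clean and correct repackaging of that. However, there are two genuine gaps. First, Lemma~\ref{lemm} requires $g\in O(l(NS(X)))$, i.e.\ an isometry of the discriminant form \emph{over} $\F_p$, whereas the diagonal operator $g: v_i\mapsto F_k^{1-i}(\xi)v_i$ that you produce from a solution $\xi$ of your congruences is a priori only a $k$-linear isometry of $l(NS(X))\otimes k$ preserving $\mathcal{K}(X)$. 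You must verify that $g$ commutes with $f=id\otimes F_k$. This is automatic on $v_1,\ldots,v_{2\sigma-1}$ (where $f^{-1}(v_i)=v_{i+1}$), but not on $v_{2\sigma}$: one has to expand $v'=f^{-1}(v_{2\sigma})$ in the basis, use the orthogonality relations to show that only coefficients $b_i$ with $i\equiv 1\pmod{2m}$ can be non-zero, and conclude $g(v')=\xi v'$. This descent computation is the main technical verification in the converse direction and your proposal omits it entirely; the phrase ``Lemma~\ref{lemm} lifts every $\xi$ satisfying all these congruences'' silently conflates orthogonality over $k$ with $\F_p$-rationality.

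Second, you explicitly defer the evaluation of your gcd formula and the identification of which support sets $S(X)$ occur, calling it ``the main technical content.'' That content is precisely what Table~1 asserts, so as written the theorem is not proved. The missing steps are not long but must be carried out: by the bijectivity of $\Psi$ every tuple $(a_1,\ldots,a_{\sigma-1})$, hence every support set $S$, is realised; one then checks that $\gcd\bigl(p^{\sigma}+1,\{p^{l}-1: l\in S\}\bigr)=p^{m}+1$ where $m$ is the largest divisor of $\sigma$ with $\sigma/m$ odd and $2m\mid l$ for all $l\in S$ (equivalently, the condition ``$a_i=0$ unless $2m\mid i$'' of the paper), and that the locus of such $X$ has dimension $\lfloor(\sigma-1)/(2m)\rfloor$, which produces the ``generic / $d$-dimensional / unique'' column. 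Without these two pieces the argument establishes only the divisibility of the index by $p^{\sigma}+1$ and an upper-bound heuristic, not the table.
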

\begin{center}
\begin{tabular}{|c|c|c|}
     \hline
     $\sigma$ & non-symplectic index & family \\
     \hline
     1 & $p+1$&  unique \\
     \hline
     2 & 2 & generic\\
     & $p^{2}+1$ & unique \\
     \hline
     3 & 2 & generic\\
     & $p+1$ & 1 dimensional\\
    &  $p^{3}+1$ & unique \\
     \hline
     4 & 2&  generic\\ & $p^{4}+1$ & unique\\
     \hline
     5 & 2 & generic \\ & $p+1$ & 2 dimensional \\ & $p^{5}+1$ & unique \\
     \hline
     6 & 2 & generic \\ & $p^{2}+1$ & 1 dimensional \\ &  $p^{6}+1$ &unique\\
     \hline
     7 & 2 & generic \\ & $p+1$ & 3 dimensional\\ & $p^{7} +1$ & unique\\
     \hline
     8 & 2 & generic \\ & $p^{8} +1$ & unique \\
     \hline
     9 & 2 & generic \\ & $p+1$ & 4 dimensional \\ & $p^{3} +1$ & 1 dimensional \\ &  $p^{9} +1$ & unique \\
     \hline
     10 & 2 & generic \\ &  $p^{2}+1$ & 2 dimensional \\ & $p^{10}+1$ & unique \\
     \hline
     \end{tabular}\\
\vspace{0.2cm}
Table 1
\end{center}
 
\begin{proof}
Let $v_{1}, \cdots , v_{2\sigma}$ be the basis of $l(NS(X)) \otimes k$ in the section 2 which makes the intersection matrix (\ref{mat}). For all $\alpha \in \Aut(X)$, each $v_{i}$ is an eigenvector of $\nu _{X}(\alpha)$. Assume $\nu (\alpha)(v_{1}) = \xi v_{1}$ and the order of $\xi$ in $k^{*}$ is $n$. 
Then $\nu (\alpha) (v_{i}) = F_{k}^{1-i} (\xi) v_{i}$ and $\xi$ determines $\nu (\alpha)$. Let $m$ be the smallest non-negative integer such that
$F_{k}^{-m}(\xi) = \xi ^{-1}$.
 If $m=0$, $\xi$ is 1 or -1  and there is no restriction for $a_{i} = v_{1} \cdot v_{\sigma +1+i}$. $(i =1 ,\cdots , \sigma -1$) 
If $m >0$,
$m$ is a divisor of  $\sigma$ and $\sigma / m$ is an odd integer. The order of $p$ in $(\Z /n)^{*}$ is $2m$ and $n$ is a divisor of $p^{m}+1$. Becuase $\nu (\alpha) \in O(l(NS(X)) \otimes k)$, $a_{i} =0$ unless $i$ is a multiple of $2m$.\\

Conversely, assume $m$ is a divisor of $\sigma$ such that $\sigma / m$ is an odd integer. Assume $n$ is a divisor of $p^{m}+1$ and $\xi \in k^{*}$ is a primitive $n$-th root of unity. Suppose $a_{i} =0$ unless $i$ is a multiple of $2m$. Let $g$ be a linear operator of $l(NS(X)) \otimes k$
which sends
$v_{i}$ to $F_{k}^{1-i}(\xi) v_{i}$. $(i=1, \cdots, 2\sigma)$ It is clear that $g$ preserves the subspace $\mathcal{K}(X)$. Since $a_{i} =0$ for $2m \nmid i$, $g \in O(l(NS(X)) \otimes k)$. $g$ is rational over $\F_{p}$ if and only if $f ^{-1}(g(w)) = g(f ^{-1}(w))$ for all $w \in l(NS(X)) \otimes k$. For that,
it is enough to check that $f ^{-1}(g(v_{i})) = g(f ^{-1}(v_{i}))$ for $i= 1 , \cdots , 2\sigma$. Because $f ^{-1}(v_{i})= v_{i+1}$ $(i=1, \cdots, 2\sigma -1)$, it is easy to see that this is true for $i = 1, \cdots, 2\sigma -1$. We put $v' = f ^{-1}(v_{2\sigma})$
and $v' = b_{1}v_{1} + b_{2} v_{2} + \cdots b_{2\sigma}v_{2\sigma}$. $(b_{i} \in k)$
By the assumption, $v_{2\sigma} \cdot v _{i} =0$ unless $i = \sigma -2me$ for a non-negative integer $e$ and $v_{2\sigma} \cdot v_{\sigma} =1$.
Since the pairing of $l(NS(X)) \otimes k$ is defined over $\mathbb{F} _{p}$,
for any $u,w \in l(NS(X)) \otimes k$, $F_{k}^{-1}(u \cdot w) = f ^{-1}(u) \cdot f ^{-1}(w)$. Hence 
$v' \cdot v_{i} = 0$  for $i \neq \sigma +1 - 2me$ $(i \geq 2)$ and $v' \cdot v' =0$. It is not difficult to see
that $b_{i} = 0$ unless $i =1+2me$ and $g(v') = \xi v'$, so $f ^{-1}(g(v_{2\sigma})) = g(f ^{-1}(v_{2\sigma}))$. 
It follows that $g \in O(l(NS(X))$ and, by Lemma \ref{lemm}, $g \in \im \nu _{X}$. Then the non-symplectic index of 
$X$ is divisible by $p^{m}+1$ and the proof is complete.
\end{proof}
\begin{rem} Simon Brandhorst proves, in his dissertation, partial result of Theorem 3.1. His proof is also based on the crystalline Torelli theorem.

\end{rem}
Let us say a unique supersingualr K3 surface of Artin invariant $\sigma$ whose non-symplectic index is $p^{\sigma} +1$ is the special supersingular 
K3 surface of Artin invariant $\sigma$. Every special supersingular K3 surface has a model over a finite field.
\begin{cor}
The maximal value of the non-symplectic index of a K3 surface defined over $k$ is $p^{10}+1$. The only K3 surface with the maximal non-symplectic index is the special supersingular K3 surface of Artin invariant 10. 
\end{cor}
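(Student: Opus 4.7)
The plan is simply to combine the finite-height bound recorded in the introduction with the bookkeeping provided by Theorem 3.1. First I would isolate the two cases that exhaust all K3 surfaces over $k$: either $X$ has finite height, in which case the introduction already shows that the non-symplectic index is at most $66$; or $X$ is supersingular, in which case Theorem 3.1 enumerates the possible non-symplectic indices in Table 1 according to the Artin invariant $\sigma \in \{1,\dots,10\}$.

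Next I would scan Table 1 and record the elementary monotonicity: for each fixed $\sigma$ the largest listed value is $p^{\sigma}+1$, and no entry of the form $p^{m}+1$ with $m<10$ exceeds $p^{10}+1$. Since $p>3$ forces $p\ge 5$, we have $p^{10}+1 \ge 5^{10}+1$, which is vastly larger than the finite-height bound $66$. Thus the global maximum over all K3 surfaces over $k$ is attained in the supersingular locus and equals $p^{10}+1$, realized in the row $\sigma=10$ of Table 1.

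For the uniqueness statement, Table 1 shows that the value $p^{10}+1$ appears nowhere except in the row $\sigma=10$, where it is labeled \emph{unique}. That label is justified by Theorem 2.3(2) together with Theorem 2.2: a supersingular K3 surface of Artin invariant $10$ is determined up to isomorphism by its strictly characteristic subspace $\mathcal{K}(X)$, and attaining non-symplectic index $p^{10}+1$ forces $\mathcal{K}(X)$ to correspond to the origin of $\mathbb{A}^{9}/\mu_{p^{10}+1}(k)$ (all parameters $a_i=0$), which singles out the special supersingular K3 surface of Artin invariant $10$. Any other K3 surface over $k$ — finite height, supersingular with $\sigma<10$, or a non-special supersingular surface with $\sigma=10$ — has strictly smaller non-symplectic index.

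There is no substantive obstacle here: the corollary is a direct consequence of Theorem 3.1 and the finite-height bound. The only numerical verification needed is $p^{10}+1>66$ for $p\ge 5$, which is immediate, plus the observation that the label \emph{unique} in Table 1 already encodes the uniqueness part via the crystalline Torelli theorem.
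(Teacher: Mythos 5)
Your proposal is correct and matches the paper's (implicit) argument: the paper states this corollary without proof as a direct consequence of Theorem 3.1 together with the finite-height bound of $66$ from the introduction, which is exactly the case division and comparison you carry out. Your added justification of uniqueness via the vanishing of all $a_i$ and the crystalline Torelli theorem is consistent with the proof of Theorem 3.1 and with the paper's definition of the special supersingular K3 surface.
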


\section{Example}
Let $X$ be a complex algebraic K3 surface and $N$ be the non-symplectic index of $X$. The rank of the transcendental lattice of $X$, $T(X)$ is divisible by 
$\phi (N)$. If the rank of $T(X)$ is equal to $\phi (N)$, $X$ is a CM K3 surface and $X$ has a model over a number field. There are many examples satisfying this condition. (See \cite{Br0})
Assume the rank of $T(X)$ is equal to $\phi (N)$ and $X$ is defined over a number field $F$. We choose $\alpha \in \Aut(X)$ such that the order of $\rho _{X}(g)$ is $N$. For almost all finite places $\upsilon$ of $F$, $X$ has a good reduction $X_{\upsilon}$ over $\upsilon$ and $g$ is expandable to $X_{\upsilon}$. For such a $\upsilon$, the height (and the Artin invariant) of $X_{\upsilon}$ is determined by the congruence class of the residue characteristic $p_{\upsilon}$ modulo $N$.
 In particular, if $m$ is the smallest positive integer such that $p_{\upsilon} ^{m} \equiv -1$ modulo $N$, then 
$X_{\upsilon}$ is a supersingular of Artin invariant $m$.  (\cite{J2}, Theorem 4.7, \cite{Je}, Theorem 2.3)
 Moreover, in this case, $\nu _{X_{\upsilon}} \in O(l(NS(X_{\upsilon}))) $ has $2\sigma$ distinct eigenvalues, so $a_{i} =0$ for all $i$ in (\ref{mat}) and $X_{\upsilon}$ is a special supersingular K3 surface.\\

For example, an elliptic K3 surface
$$X : y^{2} = x^{3}+t^{7}x+t$$
has a purely non-symplectic automorphism of order 38,
$$\alpha : (x,y,t) \mapsto (\xi ^{14}x, \xi ^{21}y, \xi^{4}t).$$
Here $\xi$ is a primitive 38th root of unity. If a rational prime $p$ does not divide 38, $(X, \alpha)$ has a good reduction $X_{p}$ over $\Z/p$.
If $p \equiv 3, 13, 15, 19, 29$ or 33 modulo 38 (and $p>3$), $X_{p}$ is special supersingular of Artin invariant 9.
If $p \equiv 27$ or 31 modulo 38, $X_{p}$ is special supersingular of Artin invariant 3.
If $p \equiv 37$  modulo 38, $X_{p}$ is supersingular of Artin invariant 1.\\

Because many complex K3 surfaces, for which the rank of transcendental lattice is equal to the phi value of the non-symplectic index, are define over $\Q$,
many special supersingular K3 surfaces have models over the prime field $\F_{p}$. We have the following question naturally.\\

{\large \textbf{Qestion.}}
 Is every special supersingular K3 surface has a model over $\F _{p}$?

        \vspace{0.6cm}
    {\bf Acknowledgment}\\
                           This research was supported by Basic Science Research Program through the National
Research Foundation of Korea (NRF) funded by the Ministry of Education, Science and
Technology [2015R1D1A1A01058962].\\

\vskip 1cm

\noindent
\newpage
J.Jang\\
Department of Mathematics\\
University of Ulsan \\
Daehakro 93, Namgu Ulsan 680-749, Korea\\ \\
jmjang@ulsan.ac.kr

     \end{document}